%
\RequirePackage{etoolbox}
\csdef{input@path}{{style/}{graphics/}}
\makeatletter
\input{arxiv-vmsta.cfg}
\makeatother
\documentclass[numbers,compress]{vmsta}

\volume{1}
\pubyear{2014}
\firstpage{129}
\lastpage{138}
\doi{10.15559/14-VMSTA14} 

\newtheorem{thm}{Theorem}
\newtheorem{lem}{Lemma}

\theoremstyle{definition}
\newtheorem{defin}{Definition}
\newtheorem{rem}{Remark}

\newcommand{\rrvert}{\vert}
\newcommand{\llvert}{\vert}
\newcommand{\lleft}{\left}
\newcommand{\rright}{\right}
\urlstyle{rm}
\allowdisplaybreaks

\begin{document}

\begin{frontmatter}

\title{Heat equation with general stochastic measure colored in time}

\author{\inits{V.}\fnm{Vadym}\snm{Radchenko}\corref{cor1}}\email{vradchenko@univ.kiev.ua}
\cortext[cor1]{Corresponding author.}

\address{Department of Mathematical Analysis,\\
Taras Shevchenko National University of~Kyiv, Kyiv, Ukraine}

\markboth{V. Radchenko}{Heat equation with general stochastic measure
colored in time}

\begin{abstract}
A stochastic heat equation on $[0,T]\times{\mathbb R}$ driven by a
general stochastic measure $d\mu(t)$ is investigated in this paper. For
the integrator $\mu$, we assume the $\sigma$-additivity in probability
only. The existence, uniqueness, and H\"{o}lder regularity of the
solution are proved.
\end{abstract}

\begin{keyword}
Stochastic measure\sep
stochastic heat equation\sep
mild solution\sep
H\"{o}lder regularity\sep
Besov space

\MSC[2010]
60H15\sep
60G17\sep
60G57
\end{keyword}

\received{2 October 2014}
\revised{10 November 2014}
\accepted{13 November 2014}
\publishedonline{5 December 2014}
\end{frontmatter}

\vspace*{-3pt}\section{Introduction}
\label{scintro}

In this paper, we consider a stochastic heat equation that can
formally be written as
\begin{equation}
\label{eqhshft} \lleft\{ %
\begin{array}{@{}l}
du(t, x)=a^2\displaystyle\frac{\partial^2 u(t, x)}{\partial x^2}\,dt +f\bigl(t, x, u(t,
x)\bigr)\,dt+ \sigma(t, x)\,d{\mu}(t) ,\\[6pt]
u(0, x)=u_0(x) ,
\end{array}\rright. %
\end{equation}
where $(t, x)\in[0, T]\times{\mathbb R}$, $a\in{\mathbb R}$, $a\ne0$,
and $\mu$ is a stochastic measure (SM) defined
on the Borel $\sigma$-algebra of $[0, T]$. We consider a solution to the
formal equation~(\ref{eqhshft}) in the mild
sense (see~Eq.~(\ref{eqhsift})). We prove the existence and
uniqueness of the solution and obtain H\"{o}lder
regularity of its paths under some general conditions for the
stochastic part of equation.\vadjust{\goodbreak}

A similar problem for $\mu$ dependent on the spatial variable $x$ was
considered in~\cite{rads09}. The stochastic heat
equation on fractals was studied in~\cite{rads12}, and a review of results on
equations driven by SMs is given
in~\cite{radspr14}.

For equations driven by white noise, the regularity of paths of
solutions was considered in~\cite[Chapter 3]{walsh}.
Equations driven by fractional noise were studied in~\cite[Chapter
2]{tudor13}. In many papers, the regularity of
solutions was considered in appropriate function spaces; see, for
example, \cite{krular} and references therein.

\section{Preliminaries}
\label{scprelimt}

Let ${\sf L}_0={\sf L}_0(\varOmega, {\mathcal F}, {\sf P} )$ be the set of
(equivalence classes of) all real-valued
random variables defined on a complete probability space $(\varOmega,
{\mathcal F}, {\sf P} )$. The convergence in ${\sf
L}_0$ is understood as the convergence in probability. Let ${\sf X}$ be
an arbitrary set, and ${\mathcal{B}}$ be a
$\sigma$-algebra of subsets of ${\sf X}$.

\begin{defin}
Any $\sigma$-additive mapping $\mu:\ {\mathcal{B}}\to{\sf L}_0$ is
called a {\em stochastic measure} (SM).
\end{defin}

In other words, $\mu$ is a vector measure with values in ${\sf L}_0$.
In~\cite{kwawoy}, such $\mu$ is called a general
SM.

Examples of SMs are the following. Let ${\sf X}=[0, T]\subset\mathbb
{R}_+$, ${\mathcal{B}}$ be the $\sigma$-algebra of
Borel subsets of $[0, T]$, and $N(t)$ be a square-integrable
martingale. Then $\mu({\sf A})=\int_0^T {\bf1}_{\sf
A}(t)\,dN(t)$ is an SM. If $W^H(t)$ is a fractional Brownian motion
with Hurst index~$H>1/2$ and $f: [0,
T]\to\mathbb{R}$ is a bounded measurable function, then $\mu({\sf
A})=\int_0^T f(t){\bf1}_{\sf A}(t)\,dW^H(t)$ is also
an SM, as follows from~\cite[Theorem~1.1]{memiva}. An $\alpha$-stable
random measure defined on a $\sigma$-algebra is
an SM \cite[Chapter 3]{samtaq}. Theorem~8.3.1 of~\cite{kwawoy} states
the conditions under which the increments of a
real-valued L\'{e}vy process generate an SM.

For a deterministic measurable function $g:{\sf X}\to{\mathbb R}$ and
SM~$\mu$, an integral of the form $\int_{\sf
X}g\,d\mu$ is defined and studied in~\cite[Chapter 7]{kwawoy}; see
also~\cite{curbera}. In particular, every bounded
measurable $g$ is integrable w.r.t. any~$\mu$. An analogue of the
Lebesgue dominated convergence theorem holds for
this integral \cite[Proposition~7.1.1]{kwawoy}.\looseness=1

We consider the {\em Besov spaces} $B^\alpha_{22}([c, d])$. Recall that
the norm in this classical space for $0<\alpha<
1$ may be introduced by
%
%
\begin{equation}
\label{eqbssn} \|g\|_{B^\alpha_{22}([c, d])}=\|g\|_{{\sf L}_{2}([c,
d])}+ \Biggl(\int
_0^{d-c}\ \bigl(w_2(g, r)
\bigr)^2 r^{-2\alpha-1} \,dr \Biggr)^{1/2},
\end{equation}
where
\[
w_2(g, r)=\sup_{0\le h\le r} \Biggl(\int
_{c}^{d-h} \bigl|g(s+h)-g(s)\bigr|^2\, ds
\Biggr)^{1/2}.
\]

For all $n\ge1$, $1\le k\le2^n$, put $\varDelta_{kn}^{(t)}= ((k-1)
2^{-n}t, k 2^{-n}t ]$.

The following estimate is a key tool for the proof of H\"{o}lder
regularity of the stochastic integral. In our estimates,
$C$ and $C(\omega)$ will denote a constant and a random constant,
respectively, which may be different from formula to
formula.

\begin{lem}[Lemma~3.2~\cite{rads09}]
Let SM~$\mu$ be defined on the Borel $\sigma$-algebra of $[0,t]$,\allowbreak  ${\sf
Z}$ be an arbitrary set, and $q(z,s): {\sf
Z}\times[0,t]\to{\mathbb R}$ be a function such that for some
$1/2<\alpha<1$ and for each $z\in{\sf Z}$,
$q(z,\cdot)\in B^\alpha_{22}([0,t])$. Then the random function
\[
\eta(z)=\int_{[0,t]}q(z,s)\,d\mu(s),\quad z\in{\sf Z},
\]
has a version $\tilde{\eta}(z)$ such that for some constant $C$
(independent of $z$, $\omega$) and each
$\omega\in\varOmega$,
%
%
\begin{align}
\bigl|\tilde{\eta}(z)\bigr|\le{}&\bigl|q(z,0)\mu \bigl([0,t] \bigr)\bigr|\nonumber\\
&{}+ C \bigl\|q(z, \cdot)\bigr\|_{B^\alpha_{22}([0,t])} \biggl\{\sum_{n\ge1}2^{n(1-2\alpha)}
\sum_{1\le k\le2^{n}} \bigl|\mu \bigl(\varDelta_{kn}^{(t)}
\bigr) \bigr|^2 \biggr\}^{1/2} .\label{eqvovt}
\end{align}
\end{lem}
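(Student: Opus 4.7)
The plan is to reduce the integral to dyadic piecewise-constant approximations, telescope the difference, and apply the Cauchy--Schwarz inequality twice, with the final step matching the dyadic characterization of the Besov norm. First I would, for each $n\ge 0$, define $q_n(z,\cdot)$ to be the step function that takes on $\Delta_{kn}^{(t)}$ the mean value of $q(z,\cdot)$ over that interval. Since $q(z,\cdot)$ is bounded and measurable, the dominated-convergence theorem for integrals with respect to an SM (Proposition~7.1.1 of~\cite{kwawoy}) gives $\int_{[0,t]} q_n(z,\cdot)\,d\mu \to \int_{[0,t]} q(z,\cdot)\,d\mu$ in probability, and after telescoping,
\[
\int_{[0,t]} q(z,\cdot)\,d\mu = q(z,0)\,\mu\bigl([0,t]\bigr) + \sum_{m\ge 1}\int_{[0,t]}(q_m-q_{m-1})(z,\cdot)\,d\mu,
\]
once the constant discrepancy $(q_0(z)-q(z,0))\mu([0,t])$ is absorbed into the $m=1$ term.

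The key computation is that on each $\Delta_{km}^{(t)}$ the function $q_m-q_{m-1}$ is a constant $c_{k,m}(z)$, and the standard averaging inequality $\|q_m-q(z,\cdot)\|_{L^2}\le C\,w_2(q(z,\cdot),2^{-m}t)$ combined with the identity $\|q_m-q_{m-1}\|_{L^2}^2 = t\,2^{-m}\sum_k c_{k,m}(z)^2$ and the triangle inequality yields
\[
\sum_{k=1}^{2^m} c_{k,m}(z)^2 \le C\,\frac{2^m}{t}\,w_2\bigl(q(z,\cdot),2^{-m+1}t\bigr)^2.
\]
A first application of Cauchy--Schwarz at the level-$m$ stage then produces
\[
\biggl|\int_{[0,t]}(q_m-q_{m-1})(z,\cdot)\,d\mu\biggr| \le C\,t^{-1/2}\,2^{m/2}\,w_2\bigl(q(z,\cdot),2^{-m+1}t\bigr)\,\Bigl(\sum_{k=1}^{2^m}\bigl|\mu\bigl(\Delta_{km}^{(t)}\bigr)\bigr|^2\Bigr)^{1/2}.
\]

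Summing over $m$ and applying Cauchy--Schwarz a second time with the splitting $2^{m/2}=2^{m\alpha}\cdot 2^{m(1/2-\alpha)}$ factors the bound into the dyadic-$\mu$ sum appearing in~(\ref{eqvovt}) times $\bigl(\sum_m 2^{2m\alpha}w_2(q(z,\cdot),2^{-m+1}t)^2\bigr)^{1/2}$. Because $w_2$ is non-decreasing in $r$, the elementary comparison $2^{2m\alpha}w_2(q,2^{-m+1}t)^2 \le C\int_{2^{-m}t}^{2^{-m+1}t} w_2(q,r)^2 r^{-2\alpha-1}\,dr$ shows that this last factor is controlled by $\|q(z,\cdot)\|_{B^\alpha_{22}([0,t])}$ via~(\ref{eqbssn}). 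The main obstacle is constructing the pathwise version $\tilde\eta$: because an SM is only $\sigma$-additive in probability, almost-sure convergence of the telescoping series is not automatic, so I would define $\tilde\eta(z)(\omega)$ as the sum of the series whenever the right-hand side of~(\ref{eqvovt}) is finite at $\omega$, and arbitrarily otherwise. The estimate then holds for every $\omega$, while the identification $\tilde\eta(z)=\eta(z)$ a.s.\ follows from the $L_0$-convergence of the partial sums to $\eta(z)$ guaranteed by the dominated-convergence step.
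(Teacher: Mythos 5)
This lemma is quoted from \cite{rads09} without proof in the present paper, so there is no internal argument to compare against; your proposal is, however, essentially the standard proof of Lemma~3.2 of \cite{rads09}: dyadic step-function approximation of $q(z,\cdot)$, telescoping of the stochastic integrals, Cauchy--Schwarz within each dyadic level and again across levels via the splitting $2^{m/2}=2^{m\alpha}\cdot 2^{m(1/2-\alpha)}$, and the dyadic characterization of the $B^\alpha_{22}$ seminorm through $w_2$. The only cosmetic deviation is your use of interval means rather than left-endpoint values for $q_n$, which is harmless since the discrepancy $(q_0(z)-q(z,0))\mu([0,t])$ is controlled by $C\|q(z,\cdot)\|_{B^\alpha_{22}([0,t])}\{\sum_{n\ge1}2^{n(1-2\alpha)}\sum_{k}|\mu(\varDelta_{kn}^{(t)})|^2\}^{1/2}$ exactly as you indicate, and your treatment of the version $\tilde\eta$ (pathwise sum of the series where it converges, identified with $\eta(z)$ a.s.\ through the in-probability limit) is the correct way to handle the fact that $\mu$ is only $\sigma$-additive in probability.
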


From Lemma~3.1~\cite{rads09} it follows that, for $\varepsilon>0$,
%
%
\begin{equation}
\label{eqestm} \sum_{n\ge1}2^{-n\varepsilon}\sum
_{1\le k\le2^{n}} \bigl|\mu \bigl(\varDelta_{kn}^{(t)}
\bigr) \bigr|^2 < +\infty\quad {\rm a.\ s.}
\end{equation}

\section{The problem}
\label{scpropt}

Consider equation~(\ref{eqhshft}) in the following mild sense:
%
%
\begin{align}
u(t, x)={}&\int_{\mathbb R}{p}(t, x-y)u_0(y)\,dy
+\int_0^t\,ds \int_{\mathbb R}{p}(t-s, x-y)f \bigl(s, y, u(s, y) \bigr)\,dy\nonumber\\
&{}+\int_{(0,t]} \,d\mu(s) \int_{\mathbb R} {p}(t-s, x-y)\sigma(s, y)\,dy .\label{eqhsift}
\end{align}

Here
%
%
\begin{equation}
\label{eqfptx} {p}(t, x) = \frac{1}{2a\sqrt{\pi t}}\, e^{-\frac
{x^2}{4a^2t}}
\end{equation}
is the Gaussian heat kernel, $u(t, x)=u(t, x, \omega):[0, T]\times
{{\mathbb R}}\times\varOmega\to{\mathbb R}$ is an
unknown measurable random function, and $\mu$ is an SM defined on the Borel
$\sigma$-algebra of $[0,T]$. The integrals of
random functions w.r.t. $dy$ and $ds$ are taken for each fixed $\omega
\in\varOmega$.

Throughout this paper, we will use the following assumptions.

\renewcommand\theenumi{\textbf{A\arabic{enumi}.}}
\renewcommand\labelenumi{\theenumi}
\begin{enumerate}
\item $u_0(y)=u_0(y, \omega):{\mathbb R}\times\varOmega\to
{\mathbb
R}$ is measurable and $\omega$-wise bounded,\allowbreak
$\llvert u_0(y, \omega)\rrvert\le C(\omega)$.

\item $u_0(y)$ is H\"{o}lder continuous:
\[
\bigl\llvert u_0(y_1)-u_0(y_2)
\bigr\rrvert\le C(\omega)\llvert y_1-y_2\rrvert
^{\beta
(u_0)},\quad\beta(u_0)\ge1/2 .
\]

\item $f(s, y, v):[0, T]\times{{\mathbb R}}\times{\mathbb R}\to
{\mathbb R}$ is measurable and bounded: \mbox{$|f(s, y,
v)|\le C$}.

\item $f(s, y, v)$ is uniformly Lipschitz in $y, v \in{{\mathbb R}}$:
\[
\bigl\llvert f(s, y_1, v_1)-f(s, y_2,
v_2) \bigr\rrvert\le C \bigl(\llvert y_1-y_2
\rrvert+ \llvert v_1-v_2\rrvert \bigr) .
\]

\item $\sigma(s, y):[0, T]\times{{\mathbb R}}\to{\mathbb R}$ is
measurable and bounded: $\llvert\sigma(s,
y)\rrvert\le C$.

\item $\sigma(s, y)$ is H\"{o}lder continuous:
\[
\bigl\llvert\sigma(s_1, y_1)-\sigma(s_2,
y_2) \bigr\rrvert\le C \bigl(\llvert s_1-s_2
\rrvert^{\beta(\sigma)}+\llvert y_1-y_2\rrvert
^{\beta
(\sigma)} \bigr),\quad\beta(\sigma)>1/2.
\]

\item $\mu$ is H\"{o}lder continuous:
\[
\bigl\llvert\mu\bigl((s_1,s_2]\bigr) \bigr\rrvert\le C(\omega)
\llvert s_1-s_2\rrvert^{\beta(\mu)},\quad
s_1, s_2\in[0,T],\ \beta(\mu)>0.
\]
\end{enumerate}

Recall that $\int_{\mathbb R} {p}(t,x)\, dx =1$.

\section{H\"{o}lder continuity in $x$}
\label{scholdxt}

Consider the regularity of paths of the stochastic integral from~(\ref{eqhsift}).

\begin{lem}\label{lmhcvxt}
Let Assumptions A5 and A6 hold. Then, for any fixed $t\in[0, T]$ and
$\gamma_1<{\beta}(\sigma)-{1}/{2}$, the
stochastic function
\[
\vartheta(x)=\int_{(0,t]}\,d\mu(s) \int_{\mathbb R}
{p}(t-s, x-y)\sigma(s, y)\,dy ,\quad x\in{\mathbb R},
\]
has a H\"{o}lder continuous version with exponent $\gamma_1$.
\end{lem}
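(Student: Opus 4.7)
The plan is to apply the key estimate (\ref{eqvovt}) not to $\vartheta(x)$ directly, but to the increment $\vartheta(x_1) - \vartheta(x_2)$. Take ${\sf Z} = \mathbb{R}^2$, let $z = (x_1, x_2)$, and set
\[
q(z, s) = \int_{\mathbb R}\bigl[p(t-s, x_1 - y) - p(t-s, x_2 - y)\bigr]\sigma(s, y)\,dy,
\]
so that the corresponding $\eta(z) = \int_{(0, t]} q(z, s)\,d\mu(s)$ equals $\vartheta(x_1) - \vartheta(x_2)$. The first summand on the right of (\ref{eqvovt}) is $|q(z, 0)\mu([0, t])| \leq C|x_1 - x_2|^{\beta(\sigma)}|\mu([0, t])|$, whose exponent already exceeds $\gamma_1$. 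The real task is thus to show that, for each $\gamma_1 < \beta(\sigma) - 1/2$, there exists $\alpha \in (1/2, 1)$ with $\|q(z, \cdot)\|_{B^\alpha_{22}([0, t])} \leq C|x_1 - x_2|^{\gamma_1}$. Combined with (\ref{eqestm}), this gives $|\tilde\eta(x_1, x_2)| \leq C(\omega)|x_1 - x_2|^{\gamma_1}$ for each fixed $(x_1, x_2)$; running the inequality over the countable set of dyadic rationals and extending by continuity produces the desired Hölder version $\tilde\vartheta$.

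For the Besov-norm estimate, the change of variable $w = x - y$ rewrites
\[
q(z, s) = \int_{\mathbb R} p(t-s, w)\phi(s, w)\,dw, \qquad \phi(s, w) = \sigma(s, x_1 - w) - \sigma(s, x_2 - w),
\]
so that by A6, $\|\phi\|_\infty \leq C|x_1 - x_2|^{\beta(\sigma)}$ and $\phi$ is $\beta(\sigma)$-Hölder in both variables. The $L^2([0, t])$-part of (\ref{eqbssn}) is then at most $Ct^{1/2}|x_1 - x_2|^{\beta(\sigma)}$. For the seminorm $w_2(q(z, \cdot), r)$, I decompose $q(z, s+h) - q(z, s) = J_1 + J_2$, where $J_1$ pairs the heat-kernel increment $p(t-s-h, w) - p(t-s, w)$ against $\phi(s, w)$, and $J_2$ pairs $p(t-s-h, w)$ against $\phi(s+h, w) - \phi(s, w)$. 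In $J_1$, the moment cancellation $\int[p(t-s-h, w) - p(t-s, w)]\,dw = 0$ lets me replace $\phi(s, w)$ by $\phi(s, w) - \phi(s, 0)$, and interpolation gives $|\phi(s, w) - \phi(s, 0)| \leq C|w|^{\beta(\sigma)(1-\rho)}|x_1 - x_2|^{\beta(\sigma)\rho}$ for any $\rho \in [0, 1]$; combined with the scaling $\int|p(\tau+h, w) - p(\tau, w)||w|^{\beta'}\,dw \leq C h \tau^{\beta'/2 - 1}$ (valid for $h \leq \tau$, with $\beta' = \beta(\sigma)(1-\rho)$) this yields $\bigl(\int_0^{t-h}|J_1|^2\,ds\bigr)^{1/2} \leq C|x_1 - x_2|^{\beta(\sigma)\rho}h^{(1+\beta')/2}$. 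For $J_2$, the parallel interpolation $|\phi(s+h, w) - \phi(s, w)| \leq C|x_1 - x_2|^{\beta(\sigma)(1-\theta)}h^{\beta(\sigma)\theta}$ gives $\bigl(\int|J_2|^2\,ds\bigr)^{1/2} \leq C t^{1/2}|x_1 - x_2|^{\beta(\sigma)(1-\theta)}h^{\beta(\sigma)\theta}$.

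Inserting these into $\int_0^t w_2(q, r)^2 r^{-2\alpha-1}\,dr$, the $J_1$-contribution converges at $r = 0$ whenever $\alpha < (1 + \beta(\sigma)(1-\rho))/2$, producing the factor $|x_1 - x_2|^{2\beta(\sigma)\rho}$, while the $J_2$-contribution converges whenever $\alpha < \beta(\sigma)\theta$, producing $|x_1 - x_2|^{2\beta(\sigma)(1-\theta)}$. The binding constraint is $J_2$: taking $\theta$ slightly above $\alpha/\beta(\sigma)$ gives the exponent $\beta(\sigma)(1-\theta)$ slightly below $\beta(\sigma) - \alpha$, and letting $\alpha \downarrow 1/2$ reproduces any $\gamma_1 < \beta(\sigma) - 1/2$ (the $J_1$-side admits $\rho$ close to $1$ and thus a strictly larger exponent). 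The main obstacle is this Besov bookkeeping itself: the vanishing-moment cancellation used in $J_1$ is essential, since without it the contribution $\int r^{-2\alpha}\,dr$ already diverges at $0$ for $\alpha > 1/2$. Once the Besov-norm estimate is in hand, the remaining steps — applying (\ref{eqvovt}) over dyadic pairs, invoking (\ref{eqestm}), and taking a continuous extension — are standard.
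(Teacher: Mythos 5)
Your proposal is correct and shares its skeleton with the paper's proof: both apply the key estimate (\ref{eqvovt}) to $\eta(z)=\vartheta(x_1)-\vartheta(x_2)$ with exactly the same $q(z,s)$, reduce everything to a bound $\|q(z,\cdot)\|_{B^\alpha_{22}([0,t])}\le C|x_1-x_2|^{\gamma_1}$ for some $\alpha>1/2$, and then pass to a H\"older version. Where you diverge is in how that Besov norm is estimated. The paper never differentiates the heat kernel: it substitutes $v=(x_i-y)/(2a\sqrt{t-s})$ so that the kernel becomes $e^{-v^2}$ and all the $s$- and $x$-dependence moves into the argument of $\sigma$, yielding the two global bounds $|q(z,s+h)-q(z,s)|\le C|h|^{\beta(\sigma)}(t-s)^{-\beta(\sigma)/2}$ and $\le C|x_1-x_2|^{\beta(\sigma)}$, which are then combined by raising to powers $\lambda$ and $1-\lambda$. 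You instead split the increment into a kernel-increment part $J_1$ (handled by the vanishing-moment cancellation and the moment bound on $\partial_\tau p$, giving the stronger rate $h^{(1+\beta')/2}$) and a $\sigma$-increment part $J_2$ (handled by interpolation), and the binding constraint from $J_2$ reproduces exactly the paper's threshold $\gamma_1<\beta(\sigma)-1/2$. Both routes are sound; the paper's is shorter and avoids kernel derivatives, while yours is the more standard parabolic-scaling argument and makes explicit the version-selection step (dyadic rationals plus continuous extension) that the paper leaves implicit. One small point you gloss over: your scaling estimate $\int|p(\tau+h,w)-p(\tau,w)|\,|w|^{\beta'}\,dw\le Ch\tau^{\beta'/2-1}$ holds only for $h\le\tau$, so to get $\bigl(\int_0^{t-h}|J_1|^2\,ds\bigr)^{1/2}\le Ch^{(1+\beta')/2}|x_1-x_2|^{\beta(\sigma)\rho}$ you must treat the boundary strip $t-s-h<h$ separately with the crude bound $\int p(\tau,w)|w|^{\beta'}\,dw\le C\tau^{\beta'/2}$; this yields the same power of $h$, so the claimed bound survives, but the splitting should be stated.
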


\begin{proof} Denote
\[
q(z,s)=\int_{\mathbb R} \bigl({p}(t-s, x_1-y)-{p}(t-s,
x_2-y) \bigr)\sigma(s, y)\, dy,\quad z=(x_1,x_2,t),
\]
and apply (\ref{eqvovt}) to $\eta(z)=\vartheta(x_1)-\vartheta(x_2)$. We
will estimate the Besov space norm
in~(\ref{eqvovt}). Consider the difference
%
%
\begin{align}
&q(z,s+h)-q(z,s)\nonumber\\
&\quad{}= \biggl(\int_{\mathbb R} {p}(t-s-h, x_1-y)
\sigma(s+h,y)\,dy-\int_{\mathbb R} {p}(t-s, x_1-y)
\sigma(s,y)\,dy \biggr)\nonumber\\
&\qquad{}- \biggl(\int_{\mathbb R} {p}(t-s-h, x_2-y)
\sigma(s+h,y)\,dy-\int_{\mathbb R} {p}(t-s, x_2-y)
\sigma(s,y)\,dy \biggr)\nonumber\\
&\quad{}:=D_1-D_2.\label{eqdone}
\end{align}
Using~(\ref{eqfptx}) and the change of variables
\[
v=\frac{x_1-y}{2a\sqrt{t-s-h}},\qquad v=\frac{x_1-y}{2a\sqrt{t-s}},
\]
we get
%
%
\begin{align}
|D_1|&= C \Biggl|\int_{\mathbb R} e^{-v^2}
\sigma(s+h,x_1-2av\sqrt{t-s-h})\, dv\nonumber\\
&\quad{}-\int_{\mathbb R}e^{-v^2}\sigma(s,x_1-2av
\sqrt{t-s})\,dv \Biggr|\nonumber\\
&\stackrel{\rm A6} {\le} C \int_{\mathbb R} e^{-v^2}
\bigl(|h|^{\beta (\sigma)}
+ \bigl|v (\sqrt{t-s-h}-\sqrt{t-s} )\bigr|^{\beta(\sigma)} \bigr)\,dv \nonumber\\
&=C\int_{\mathbb R}e^{-v^2} \biggl(|h|^{\beta(\sigma)}+
\frac{|v|^{\beta
(\sigma)}|h|^{\beta(\sigma)}}{
|\sqrt{t-s-h}+\sqrt{t-s} |^{\beta(\sigma)}} \biggr)\,dv\nonumber\\
&\le C|h|^{\beta(\sigma)}\int_{\mathbb R}e^{-v^2} \biggl(1+
\frac{|v|^{\beta(\sigma)}}{\sqrt{t-s}^{\beta(\sigma)}} \biggr)\,dv
=C|h|^{\beta(\sigma)}(t-s)^{-\beta(\sigma)/2}.\label{eqdonest}
\end{align}
By a similar way, we can estimate $|D_2|$ and obtain
%
%
\begin{equation}
\label{eqquhh} \bigl|q(z,s+h)-q(z,s) \bigr|\le C |h|^{\beta(\sigma)}(t-s)^{-\beta
(\sigma)/2}.
\end{equation}

Further, consider
\begin{align*}
q(z,s+h)-q(z,s)={}& \biggl(\int_{\mathbb R} {p}(t-s-h,x_1-y)\sigma(s+h,y)\, dy\\
&{}-\int_{\mathbb R} {p}(t-s-h, x_2-y)\sigma(s+h,y)\,dy\biggr)\\
&{}- \biggl(\int_{\mathbb R}{p}(t-s, x_1-y)\sigma(s,y)\,dy\\
&{}-\int_{\mathbb R} {p}(t-s, x_2-y)\sigma(s,y)\,dy\biggr) :=E_1-E_2.
\end{align*}
Using~(\ref{eqfptx}) and the substitutions
\[
v=\frac{x_1-y}{2a\sqrt{t-s-h}},\quad v=\frac{x_2-y}{2a\sqrt{t-s-h}},
\]
we get
\begin{align*}
|E_1|&= C \Biggl|\int_{\mathbb R} e^{-v^2}
\sigma(s+h,x_1-2av\sqrt{t-s-h})\, dv\\
&\quad{}-\int_{\mathbb R} e^{-v^2}\sigma(s+h,x_2-2av
\sqrt{t-s-h})\,dv \Biggr|\\
&\stackrel{\rm A6} {\le} C \int_{\mathbb R} e^{-v^2}|x_1-x_2|^{\beta(\sigma)}
\,dv =C|x_1-x_2|^{\beta(\sigma)}.
\end{align*}
Similarly, we can estimate $|E_2|$ (we consider $|E_1|$ for $h=0$)
and obtain
%
%
\begin{equation}
\label{eqquht} \bigl|q(z,s+h)-q(z,s) \bigr|\le C |x_1-x_2|^{\beta(\sigma)}.
\end{equation}

The product of~(\ref{eqquhh}) raised to the power~$\lambda$ and~(\ref
{eqquht}) raised to the power~$1-\lambda$,
$0<\lambda<1$, now satisfies
\begin{align*}
\bigl|q(z,s+h)-q(z,s)\bigr|&\le C|h|^{\lambda\beta(\sigma)}{(t-s)}^{-\beta(\sigma
)\lambda/2}
|x_1-x_2|^{(1-\lambda)\beta(\sigma)},\\
w_2 \bigl(q(z,\cdot),r \bigr)&\le Cr^{\lambda\beta(\sigma)}
|x_1-x_2|^{(1-\lambda
)\beta(\sigma)}.
\end{align*}
If $\lambda\beta(\sigma)>1/2$, then the integral from~(\ref{eqbssn}) is
finite for some $\alpha>1/2$. In this case, the
integral does not exceed $C|x_1-x_2|^{(1-\lambda)\beta(\sigma)}$.

From the estimate of $E_1$ for $h=0$ we obtain
\[
\bigl|q(z,0)\bigr| \le C |x_1-x_2|^{\beta(\sigma)},\qquad
\bigl\|q(z,\cdot)\bigr\|_{{\sf L}_{2}([0, t])} \le C |x_1-x_2|^{\beta(\sigma)}.
\]
Therefore, we have
\[
\bigl|\vartheta(x_1)-\vartheta(x_2)\bigr|\le C(
\omega)|x_1-x_2|^{\gamma_1},\quad
\gamma_1={(1-\lambda)\beta(\sigma)}.
\]
Under the restriction $\lambda\beta(\sigma)>1/2$, we can get any $\gamma
_1<{\beta}(\sigma)-{1}/{2}$.
\end{proof}

\section{H\"{o}lder continuity in $t$}
\label{scholdtt}

\begin{lem}\label{lmhcvtt}
Assume that Assumptions A5, A6, and A7 hold. Then, if  $\gamma_2\le\beta(\mu)$ and $\gamma_2<\beta(\sigma)-1/2$, then for any fixed $x\in
{\mathbb R}$,  the stochastic process
\[
\bar{\vartheta}(t)=\int_{(0,t]} \,d\mu(s) \int
_{\mathbb R} {p}(t-s, x-y)\sigma(s, y)\,dy,\quad t\in[0,T],
\]
has a H\"{o}lder continuous version with exponent $\gamma_2$.
\end{lem}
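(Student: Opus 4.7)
Fix $t_1 < t_2$ in $[0, T]$ and set $\delta = t_2 - t_1$, $G(t, s) = \int_{\mathbb R} p(t-s, x-y)\sigma(s, y)\,dy$. The plan is to apply the Besov-space inequality (\ref{eqvovt}) to each summand of the natural decomposition
\[
\bar\vartheta(t_2) - \bar\vartheta(t_1) = \int_{(0, t_1]} d\mu(s)\,[G(t_2, s) - G(t_1, s)] + \int_{(t_1, t_2]} d\mu(s)\,G(t_2, s) =: I_1 + I_2,
\]
where $I_1$ is small thanks to the H\"older regularity in $t$ of the heat-kernel convolution of $\sigma$, and $I_2$ is small because the domain of integration carries $\mu$-mass of order $\delta^{\beta(\mu)}$.

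For $I_2$, apply (\ref{eqvovt}) on $[t_1, t_2]$ with $q(s) = G(t_2, s)$. The function $q$ is bounded (A5) and, by the change of variables $v = (x-y)/(2a\sqrt{t_2-s})$ used for $D_1$ in (\ref{eqdonest}), satisfies $|q(s+h) - q(s)| \le C h^{\beta(\sigma)}(t_2-s)^{-\beta(\sigma)/2}$. Assumption A7 then contributes in two places: directly for the boundary term $|q(t_1)\mu((t_1, t_2])| \le C(\omega)\delta^{\beta(\mu)}$, and in the random sum via $|\mu(\Delta_{kn}^{(\delta)})| \le C(\omega)(2^{-n}\delta)^{\beta(\mu)}$, which forces the sum $\{\sum_n 2^{n(1-2\alpha)}\sum_k|\mu(\Delta_{kn})|^2\}^{1/2}$ to be $\le C(\omega)\delta^{\beta(\mu)}$ once $\alpha + \beta(\mu) > 1$. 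Combining these, $|I_2| \le C(\omega)\delta^{\beta(\mu)} \le C(\omega)\delta^{\gamma_2}$ by the hypothesis $\gamma_2 \le \beta(\mu)$.

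For $I_1$, set $q_1(z, s) = G(t_2, s) - G(t_1, s)$ with $z = (t_1, t_2, x)$ and apply (\ref{eqvovt}) on $[0, t_1]$. Using A6 together with the identity $\sqrt{t_2-s} - \sqrt{t_1-s} = \delta/(\sqrt{t_2-s}+\sqrt{t_1-s})$ after the same change of variables yields the sharp pointwise bound
\[
|q_1(z, s)| \le C\Bigl(\tfrac{\delta}{\sqrt{t_2-s}+\sqrt{t_1-s}}\Bigr)^{\beta(\sigma)},
\]
whose square integrates (splitting $[0, t_1]$ at $t_1 - \delta$) to $\|q_1\|_{L^2([0, t_1])} \le C\delta^{\beta(\sigma)}$. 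Summing two copies of (\ref{eqquhh}) gives $|q_1(z, s+h) - q_1(z, s)| \le C h^{\beta(\sigma)}(t_1-s)^{-\beta(\sigma)/2}$ and hence $w_2(q_1, r) \le C r^{\beta(\sigma)}$. Interpolating this with the trivial bound $w_2 \le 2\|q_1\|_{L^2} \le C\delta^{\beta(\sigma)}$ yields $w_2(q_1, r) \le C r^{\lambda\beta(\sigma)}\delta^{(1-\lambda)\beta(\sigma)}$ for $\lambda \in (0, 1)$; choosing $\alpha$ with $\lambda\beta(\sigma) > \alpha > 1/2$ makes the Besov integral finite and gives $\|q_1\|_{B^\alpha_{22}([0,t_1])} \le C\delta^{(1-\lambda)\beta(\sigma)}$. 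Combining with $|q_1(z, 0)| \le C\delta^{\beta(\sigma)/2}$ and (\ref{eqestm}) for the random sum, inequality (\ref{eqvovt}) gives $|I_1| \le C(\omega)\delta^{(1-\lambda)\beta(\sigma)}$; letting $\lambda \downarrow 1/(2\beta(\sigma))$ drives the exponent to $\beta(\sigma) - 1/2$. A standard modification argument on a countable dense set then produces the H\"older-continuous version.

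The delicate step is the sharp $L^2$-bound $\|q_1\|_{L^2} \le C\delta^{\beta(\sigma)}$. Using only the crude uniform estimate $|q_1(z, s)| \le C\delta^{\beta(\sigma)/2}$ (which follows from $|\sqrt{t_2-s} - \sqrt{t_1-s}| \le \sqrt\delta$) would give $\|q_1\|_{L^2} \le C\delta^{\beta(\sigma)/2}$ and, through the same interpolation, produce only the strictly weaker exponent $(\beta(\sigma)-1/2)/2$. Exploiting the extra cancellation in $G(t_2, s) - G(t_1, s)$ for $s$ bounded away from $t_1$ — encoded by the denominator $\sqrt{t_2-s}+\sqrt{t_1-s}$ — is what makes the claimed exponent $\beta(\sigma) - 1/2$ reachable.
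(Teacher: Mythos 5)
Your decomposition coincides with the paper's (your $I_1$ is its $F_2$, your $I_2$ its $F_1$), and your treatment of $I_1$ --- the pointwise bound with the denominator $\sqrt{t_2-s}+\sqrt{t_1-s}$, the interpolation of the two increment estimates, and the limit $\lambda\beta(\sigma)\downarrow 1/2$ --- is essentially the paper's Step~2. The genuine gap is in $I_2$. To apply (\ref{eqvovt}) on $[t_1,t_2]$ you need $q=G(t_2,\cdot)\in B^{\alpha}_{22}([t_1,t_2])$ for some $\alpha>1/2$ with norm under control. The only available increment estimate, $|q(s+h)-q(s)|\le C|h|^{\beta(\sigma)}(t_2-s)^{-\beta(\sigma)/2}$, gives $w_2(q,r)\le Cr^{\beta(\sigma)}\delta^{(1-\beta(\sigma))/2}$, so the integral in (\ref{eqbssn}) converges only if $\alpha<\beta(\sigma)$. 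On the other hand, your bound of the random sum by $C(\omega)\delta^{\beta(\mu)}$ via A7 needs $\sum_n 2^{n(2-2\alpha-2\beta(\mu))}<\infty$, i.e.\ $\alpha>1-\beta(\mu)$. The two constraints are compatible only if $\beta(\mu)+\beta(\sigma)>1$, which does not follow from A6--A7 (take $\beta(\sigma)=0.55$, $\beta(\mu)=0.1$: you would need $0.9<\alpha<0.55$). Nor can you fall back on (\ref{eqestm}) for the random sum: that statement concerns the dyadic partition of a fixed interval, whereas here the partition of $[t_1,t_2]$ moves with the pair, so it yields no power of $\delta$ and no uniformity over pairs.

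The paper's Step~1 avoids this entirely. Rather than invoking (\ref{eqvovt}) on $[t_1,t_2]$, it approximates $\bar q$ by step functions on the \emph{fixed} dyadic partition of $[0,T]$ and telescopes. Assumption A7 enters only through the boundary terms (the dyadic pieces containing $t_1$ and $t_2$), which contribute $C(\omega)\sum_{n\ge n_0}2^{-n\beta(\mu)}\le C(\omega)\delta^{\beta(\mu)}$ no matter how small $\beta(\mu)>0$ is; the interior terms are handled by Cauchy--Schwarz against the single a.s.-finite random variable of (\ref{eqestm}) on $[0,T]$, with the smallness in $\delta$ coming from the count (\ref{eqknkn}) of dyadic intervals meeting $(t_1,t_2]$, which yields the exponent $(\beta(\sigma)-\varepsilon)/2\ge\gamma_2$. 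To close your argument you would either have to replace your $I_2$ step by a chaining argument of this kind, or add the extra hypothesis $\beta(\mu)+\beta(\sigma)>1$.
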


\begin{proof}
For $t_1<t_2$, we have
\begin{align*}
\bar{\vartheta}(t_2)-\bar{\vartheta}(t_1)&=\int
_{(t_1,t_2]}\, d\mu(s) \int_{\mathbb R}
{p}(t_2-s, x-y)\sigma(s, y)\,dy\\
&\quad{}+\int_{(0,t_1]}\, d\mu(s) \int_{\mathbb R}
\bigl({p}(t_2-s, x-y)-{p}(t_1-s, x-y) \bigr)\sigma(s, y)\,dy\\
&:=F_1+F_2.
\end{align*}
\textbf{Step 1. Estimation of $\boldsymbol{F_1}$.} Consider segments $[0,T]$, $\varDelta
_{kn}^{(T)}= ((k-1) 2^{-n}T,  k
2^{-n}T ]$, and the function
\[
\bar{q}(z,s)=\int_{\mathbb R} {p}(t_2-s, x-y)\sigma(s,
y)\,dy,\quad s\in[t_1,t_2],\ z=(x,t_2).
\]
From the estimates of $D_1$ in~(\ref{eqdone}) and~(\ref{eqdonest}) it
follows that
%
%
\begin{equation}
\label{eqeghx} \bigl|\bar{q}(z,s+h)-\bar{q}(z,s)\bigr|
\le C|h|^{\beta(\sigma)}(t_2-s)^{-\beta(\sigma)/2},
\quad s\in[t_1-h,t_2-h).
\end{equation}

Let $k_{n1}$ and $k_{n2}$ be such that $t_1\in\varDelta_{k_{n1}n}^{(T)}$,
$t_2\in\varDelta_{k_{n2}n}^{(T)}$. For the
functions
\[
\bar{q}_n(z,s)=\sum_{k=1}^{2^n}
\bar{q} \bigl(z,(k-1) 2^{-n}T\vee t_1\wedge t_2
\bigr){\bf1}_{\varDelta^{(T)}_{kn}}(s),
\]
the analogue of the Lebesgue theorem~\cite[Proposition 7.1.1]{kwawoy}
implies that
%
%
\begin{align}
&\Biggl|\int_{(t_1,t_2]}\bar{q}(z,s)\, d\mu(s) \Biggr|
= \Biggl|{\rm p}\lim_{n\to\infty}
\int_{(t_1,t_2]}\bar{q}_n(z,s) \,d\mu(s)\Biggr |\nonumber\\
&\!\!\!\quad{}= \Biggl|\int_{(t_1,t_2]}\bar{q}_0(z,s)\, d\mu(s)
+\sum_{n=1}^{\infty} \biggl(\int_{(t_1,t_2]}
\bar{q}_{n}(z,s)\, d\mu(s)- \int_{(t_1,t_2]}
\bar{q}_{n-1}(z,s)\, d\mu(s) \biggr) \Biggr|\nonumber\\
&\!\!\!\quad{}\le\bigl|\bar{q}(z,t_1)\mu \bigl((t_1,t_2]\bigr) \bigr|+
\sum_{n=n_0}^{\infty} \bigl|\bigl(\bar{q}
\bigl(z,k_{n1}2^{-n}T \bigr)-\bar{q}(z,t_1)\bigr)
\mu \bigl(\varDelta^{(T)}_{(k_{n1}+1)n} \bigr) \bigr|\nonumber\\
&\!\!\!\qquad{}+\sum_{n=n_0}^{\infty}\sum
_{k: k_{n1}\le2k-2< k_{n2}-2} \bigl| \bigl(\bar{q} \bigl(z,(2k-1)2^{-n}T \bigr)-
\bar{q} \bigl(z,(2k-2)2^{-n}T \bigr) \bigr) \mu \bigl(
\varDelta^{(T)}_{(2k)n} \bigr) \bigr|\nonumber\\
&\!\!\!\qquad{}+\sum_{n=n_0}^{\infty} \bigl| \bigl(\bar{q}
\bigl(z,(k_{n2}-1)2^{-n}T \bigr)-\bar{q} \bigl(z,(k_{n2}-2)2^{-n}T
\bigr) \bigr) \mu\bigl( \bigl((k_{n2}-1)2^{-n}T,t_2\bigr]
\bigr) \bigr|\label{eqtott}.
\end{align}
Here $n_0$ is such that
%
%
\begin{equation}
\label{eqtttn} 2^{-n_0}T<t_2-t_1
\le2^{-n_0+1}T.
\end{equation}
We have
%
%
\begin{equation}
\label{eqknkn} k_{n2}-k_{n1}\le(t_2-t_1)2^{n+2}/T,
\quad n\ge n_0.
\end{equation}

Applying Assumptions A5, A7, (\ref{eqeghx}), and the Cauchy inequality from~(\ref
{eqtott}) for $0<\varepsilon<2\beta(\sigma)-1$, we
obtain
\begin{align*}
&\Biggl|\int_{(t_1,t_2]}\bar{q}(z,s) \,d\mu(s)\Biggr |\\
&\quad{}\le C(\omega)
(t_2-t_1)^{\beta(\mu)} +C(\omega)\sum
_{n=n_0}^{\infty}2^{-n\beta(\mu)}\\
&\qquad{}+C\sum_{n=n_0}^{\infty}\sum
_{k: k_{n1}\le2k-2< k_{n2}-2} 2^{-n\beta(\sigma)} \bigl(t_2-(2k-2)2^{-n}T
\bigr)^{-\beta(\sigma)/2} \bigl|\mu \bigl(\varDelta^{(T)}_{(2k)n} \bigr) \bigr|\\
&\qquad{}+C(\omega)\sum_{n=n_0}^{\infty} 2^{-n\beta(\mu)}\\
&\quad{}\le C(\omega) (t_2-t_1)^{\beta(\mu)}+C(
\omega)2^{-n_0\beta(\mu)}\\
&\qquad{}+C \Biggl(\sum_{n=n_0}^{\infty}
2^{\varepsilon n}2^{-2n\beta(\sigma)}\sum_{k: k_{n1}\le2k-2< k_{n2}-2}
\bigl(t_2-(2k-2)2^{-n}T \bigr)^{-\beta(\sigma)}
\Biggr)^{1/2}\\
&\qquad {}\times \Biggl(\sum_{n=0}^{\infty}2^{-\varepsilon n}
\sum_{k=1}^{2^n} \mu^2 \bigl(
\varDelta^{(T)}_{kn} \bigr) \Biggr)^{1/2}\\
&\stackrel{\eqref{eqestm},\eqref{eqtttn}} {\le} C(\omega)
(t_2-t_1)^{\beta(\mu)}\\
&\qquad{}+C(\omega) \Biggl(\sum_{n=n_0}^{\infty}
2^{\varepsilon n}2^{-2n\beta
(\sigma)}\sum_{1\le i< (k_{n2}-k_{n1})/2}
\bigl(i2^{-n}T \bigr)^{-\beta(\sigma)} \Biggr)^{1/2}\\
&\quad{}\le C(\omega) (t_2-t_1)^{\beta(\mu)} +C(\omega)
\Biggl(\sum_{n=n_0}^{\infty
}2^{n(\varepsilon-\beta(\sigma))} (
k_{n2}-k_{n1} )^{1-\beta(\sigma)} \Biggr)^{1/2}\\
&\quad{}\stackrel{\eqref{eqknkn}} {\le} C(\omega) (t_2-t_1)^{\beta(\mu)}+C(
\omega) (t_2-t_1)^{(1-\beta(\sigma))/2}2^{-n_0(2\beta(\sigma
)-\varepsilon
-1)/2}
\\
&\quad{}\stackrel{\eqref{eqtttn}} {\le} C(\omega) (t_2-t_1)^{\beta(\mu)}
+C(\omega) (t_2-t_1)^{(\beta(\sigma)-\varepsilon)/2} \le C(\omega)
(t_2-t_1)^{\gamma_2}.
\end{align*}
\textbf{Step 2. Estimation of $\boldsymbol{F_2}$.} Now denote
\begin{align*}
&\tilde{q}(z,s)=\int_{\mathbb R} \bigl({p}(t_2-s,
x-y)-{p}(t_1-s, x-y) \bigr)\sigma(s, y)\,dy,\\
&\quad s\in[0,t_1],\ z=(x,t_1,t_2).
\end{align*}
Using the change of variables
\[
v=\frac{x-y}{2a\sqrt{t_2-s}},\qquad v=\frac{x-y}{2a\sqrt{t_1-s}},
\]
we get
%
%
\begin{align}
\bigl|\tilde{q}(z,s)\bigr|={}&C \Biggl|\int_{\mathbb R} e^{-v^2}
\sigma(s,x-2av\sqrt{t_2-s})\,dv
\nonumber\\
&{}-\int_{\mathbb R} e^{-v^2}\sigma(s,x-2av
\sqrt{t_1-s})\,dv\Biggr |
\nonumber\\
\stackrel{\rm A6} {\le}{}& C \int_{\mathbb R} e^{-v^2} \bigl|v (
\sqrt{t_2-s}-\sqrt{t_1-s} )\bigr|^{\beta(\sigma)}\,dv
\nonumber\\
\stackrel{\eqref{eqdonest}} {\le}{}&C(t_2-t_1)^{\beta(\sigma
)}(t_2-s)^{-\beta(\sigma)/2}.\label{eqdonestt}
\end{align}
Also, analogously to (\ref{eqdone}) and (\ref{eqdonest}), we have
%
%
\begin{equation}
\label{eqtilq} \bigl|\tilde{q}(z,s+h)-\tilde{q}(z,s) \bigr|\le C|h|^{\beta(\sigma
)}(t_1-s)^{-\beta(\sigma)/2}.
\end{equation}
From~(\ref{eqdonestt}) and (\ref{eqtilq}) for $0<\lambda<1$ and $0\le s\le
t_1-h$, we obtain
\begin{align*}
&\bigl|\tilde{q}(z,s+h)-\tilde{q}(z,s)\bigr|\\
&\quad{}\le C|h|^{\lambda\beta(\sigma)}{(t_1-s)}^{-\beta(\sigma)\lambda/2}
(t_2-t_1)^{(1-\lambda)\beta(\sigma)}(t_2-s-h)^{-(1-\lambda)\beta(\sigma)/2},\\
&w_2 \bigl(\tilde{q}(z,\cdot),r \bigr)\le Cr^{\lambda\beta(\sigma)}
(t_2-t_1)^{(1-\lambda)\beta(\sigma)}.
\end{align*}
If $\lambda\beta(\sigma)>1/2 \Leftrightarrow(1-\lambda)\beta(\sigma
)<{\beta}(\sigma)-{1}/{2}$, then the integral
from~(\ref{eqbssn}) is finite for some $\alpha>1/2$. In this case, the
integral does not exceed
$C(t_2-t_1)^{(1-\lambda)\beta(\sigma)}$.

From~(\ref{eqdonestt}) we get
\[
\bigl|\tilde{q}(z,0)\bigr| \le C (t_2-t_1)^{\beta(\sigma)/2},\qquad
\bigl\|\tilde{q}(z,\cdot)\bigr\|_{{\sf L}_{2}([0, t_1])} \le C (t_2-t_1)^{\beta
(\sigma)/2}.
\]
Therefore, from~(\ref{eqvovt}) we have $ |F_2|\le C(\omega
)(t_2-t_1)^{\gamma_2}$, which finishes the proof.
\end{proof}

\section{Solution to the equation}
\label{scsolutiont}

\begin{thm} Suppose that Assumptions A1--A6 hold.
\renewcommand\theenumi{\arabic{enumi}.}
\renewcommand\labelenumi{\theenumi}
\begin{enumerate}
\item Equation~(\ref{eqhsift}) has a solution $u(t, x)$. If $v(t, x)$ is
another solution to~(\ref{eqhsift}), then for
all $t$ and $x$, $u(t, x)=v(t, x)$~a.s.\vadjust{\goodbreak}
\item For any fixed $t\in[0, T]$ and $\gamma_1<{\beta}(\sigma)-1/2$, the
stochastic function $u(t, x)$, $x\in\mathbb{R}$,
has a H\"{o}lder continuous version with exponent $\gamma_1$.
\item In addition, let Assumption A7 hold. Then for any fixed $\delta
>0$ and \(\gamma_1\),~\(\gamma_2\) such that
$\gamma_1<{\beta}(\sigma)-1/2$,
 $\gamma_2\le\beta(\mu)$, and
 $\gamma_2<\beta(\sigma)-1/2$, the
stochastic function $u(t, x)$ has a version
$\tilde{u}(t, x)$ such that
%
%
\begin{align}
\label{equtxh}
&\bigl\llvert\tilde{u}(t_1, x_1)-
\tilde{u}(t_2, x_2) \bigr\rrvert\le C(\omega) \bigl(
\llvert t_1-t_2\rrvert^{\gamma_2}+ \llvert
x_1-x_2\rrvert^{\gamma_1} \bigr),
\nonumber
\\
&\quad{}t_i\in[\delta, T],\ x_i\in\mathbb{R}.
\end{align}
\end{enumerate}
\end{thm}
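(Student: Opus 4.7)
The plan is to handle the three assertions in order, using Picard iteration for part~1, then decomposing $u = U_0 + D + I$ and bounding each piece via the two regularity lemmas of Sections~\ref{scholdxt}--\ref{scholdtt}, assumption A2, and a Gronwall-type bootstrap for the drift. For part~1 I set
\[
I(t,x) = \int_{(0,t]} d\mu(s) \int_{\mathbb R} p(t-s,x-y)\sigma(s,y)\,dy,
\]
which is well defined by A5 and the integrability theory of \cite[Chapter~7]{kwawoy}, and iterate
\[
u^{(n+1)}(t,x) = \int_{\mathbb R} p(t,x-y)u_0(y)\,dy + \int_0^t\!\int_{\mathbb R} p(t-s,x-y)f\bigl(s,y,u^{(n)}(s,y)\bigr)\,dy\,ds + I(t,x)
\]
starting from $u^{(0)} \equiv 0$. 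Since $I$ is common to every iterate, $f$ is Lipschitz in $v$ by A4, and $\int p(t-s,x-y)\,dy = 1$, the differences satisfy $\sup_x|u^{(n+1)}-u^{(n)}|(t) \le C\int_0^t \sup_y|u^{(n)}-u^{(n-1)}|(s)\,ds$. Starting from the a.s.\ finite bound on $\sup_x|u^{(1)}-u^{(0)}|$ coming from A1, A3 and the pathwise finiteness of $\sup_x|I(t,x)|$ supplied by Lemma~\ref{lmhcvxt}, induction yields $(Ct)^n/n!$ decay, hence uniform convergence to a solution; uniqueness follows from the same estimate and Gronwall.

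For part~2 I decompose $u(t,x) = U_0(t,x) + D(t,x) + I(t,x)$. Lemma~\ref{lmhcvxt} handles $I$; a change of variables together with A2 gives $|U_0(t,x_1) - U_0(t,x_2)| \le C(\omega)|x_1-x_2|^{\beta(u_0)}$ with $\beta(u_0) \ge 1/2 > \gamma_1$. The drift $D$ is the delicate term because A4 forces the self-referential inequality
\[
|D(t,x_1) - D(t,x_2)| \le CT|x_1-x_2| + C\int_0^t\!\int p(t-s,y)\bigl|u(s,x_1-y) - u(s,x_2-y)\bigr|\,dy\,ds.
\]
For $|x_1-x_2| \le 1$, Gronwall applied to $\psi(t) = \sup_{|x_1-x_2|\le1}|u(t,x_1)-u(t,x_2)|/|x_1-x_2|^{\gamma_1}$ yields $\psi(t) \le C(\omega)e^{Ct}$; for $|x_1-x_2| > 1$ the uniform bound $|D| \le CT$ from A3 absorbs the quotient into the constant. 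To avoid applying Gronwall to a quantity not yet known to be finite, I carry out this bootstrap inside the Picard iteration so that uniform H\"older bounds on the $u^{(n)}$ pass to the limit.

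For part~3, by the triangle inequality it suffices to add $t$-regularity at fixed $x$ on $[\delta,T]$. Lemma~\ref{lmhcvtt} takes care of $I$. For the drift, splitting
\[
D(t_2,x) - D(t_1,x) = \int_{t_1}^{t_2}\!\!\int p(t_2-s,x-y)f(\cdots)\,dy\,ds + \int_0^{t_1}\!\!\int\bigl[p(t_2-s,x-y) - p(t_1-s,x-y)\bigr]f(\cdots)\,dy\,ds
\]
and combining A3 with the heat-kernel estimate $\int|p(t_2-s,y) - p(t_1-s,y)|\,dy \le C\sqrt{(t_2-t_1)/(t_1-s)}$ gives $|D(t_2,x) - D(t_1,x)| \le C\sqrt{t_2-t_1}$, which comfortably dominates $\gamma_2 < 1/2$. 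For $U_0$, differentiation under the integral via $\partial_t p = a^2\partial_x^2 p$ together with $\int|\partial_x^2 p(t,y)|\,dy = O(t^{-1})$ and A1 yields $|\partial_t U_0(t,x)| \le C(\omega)/t$, whence $|U_0(t_2,x) - U_0(t_1,x)| \le C(\omega,\delta)|t_2-t_1|$ on $[\delta,T]$. Assembling the three contributions together with part~2 delivers (\ref{equtxh}).

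I expect the main obstacle to be the time regularity of $U_0$ near $t = 0$: using only A1 and A2, one would obtain only an exponent of order $\beta(u_0)/2$, which can fall short of the target $\gamma_2 < \beta(\sigma) - 1/2$. This is exactly why part~3 restricts $t_i \in [\delta,T]$, where heat-semigroup smoothing promotes the bound to Lipschitz in $t$. A secondary technical point is the self-referential Gronwall step in part~2, which must be run on bounded $x$-windows and then patched with the uniform bound on $D$ to deliver global H\"older continuity in $x$ under a single random constant.
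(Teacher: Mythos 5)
Your proposal is correct and follows essentially the same route as the paper: the paper's proof consists of exactly this Picard iteration and then defers all remaining details (the three-term decomposition $u=U_0+D+I$, the Gronwall bootstrap for the drift, and the treatment of $U_0$ near $t=0$ that forces the restriction $t_i\in[\delta,T]$) to the proof in \cite{rads09}, substituting Lemmas~\ref{lmhcvxt} and~\ref{lmhcvtt} for the corresponding lemmas there, and your reconstruction matches that argument. The only imprecision is that the a.s.\ bound on $\sup_x|I(t,x)|$ needed to start the iteration does not follow from the H\"older estimate of Lemma~\ref{lmhcvxt} (which only controls differences and allows growth in $x$); it follows instead from applying (\ref{eqvovt}) directly to $q(z,s)=\int_{\mathbb R}p(t-s,x-y)\sigma(s,y)\,dy$, whose Besov norm is bounded uniformly in $x$ by the same $D_1$-type estimates.
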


\begin{proof}
Consider the standard iteration process. Take $u^{(0)}(t, x)=0$ and set
\begin{align*}
u^{(n+1)}(t, x)&=\int_{\mathbb R}{p}(t, x-y)u_0(y)
\,dy\\
&\quad{} +\int_0^t \,ds \int_{\mathbb R}{p}(t-s,x-y)f
\bigl(s, y, u^{(n)}(s, y) \bigr)\,dy\\
&\quad{}+\int_{(0,t]} \,d\mu(s) \int_{\mathbb R} {p}(t-s,x-y)\sigma(s, y)\,dy .
\end{align*}

Further, we can repeat the proof of Theorem~\cite{rads09}. Instead of
reference to Lemmas~5.1 and 6.1 of~\cite{rads09}, we
can refer to Lemmas \ref{lmhcvxt} and \ref{lmhcvtt} of this paper.
\end{proof}

\begin{rem}
For $u$, we obtained less regularity than  for elements of
equation~(\ref{eqhsift}). However, a solution to a heat equation usually
has the same regularity or even more regular than the coefficients.
One may expect that using other
methods gives~(\ref{equtxh}) with exponents $\gamma_2\le\beta(\mu
)\wedge\gamma_2<\beta(\sigma)$ and
$\gamma_1<{\beta}(\sigma)$.
\end{rem}

\end{document}